\documentclass[11pt,a4paper,twoside,leqno]{article} 
\usepackage[english]{babel}

\usepackage[a4paper,top=3.5cm,bottom=3cm,left=3.5cm,right=3.5cm,bindingoffset=5mm]{geometry}
\usepackage{palatino}  
\usepackage{microtype}
\usepackage[sc]{mathpazo} 
\usepackage[applemac]{inputenc}
\usepackage{amsmath,amsthm,amssymb,amsfonts,mathrsfs,bm,shapepar}
\usepackage{braket}                      
\usepackage{enumitem}                
\usepackage{indentfirst}
\usepackage{tikz}
\usepackage{tikz-cd}
\usetikzlibrary{matrix,arrows,decorations.pathmorphing}

\usepackage{titlesec}
\titleformat*{\section}{\large\scshape}
\titleformat*{\subsection}{\normalsize\bfseries}
\titleformat*{\subsubsection}{\small\bfseries}

\newcommand{\ssection}[1]{\section[#1]{\centering #1}}         

\theoremstyle{definition}

\theoremstyle{plain}

\newtheoremstyle{rema} 
        {3mm}
        {3mm}
        {\rmfamily}
        {0pt}
        {\scshape}
        {.}
        {2mm}
        {}
\theoremstyle{rema}
\newtheorem{rem}{Remark}[section]

\newtheoremstyle{thm} 
        {4mm}
        {4mm}
        {\itshape}
        {0pt}
        {\scshape}
        {.}
        {2mm}
        {}
\theoremstyle{thm}
\newtheorem{teo}{Theorem}
\newtheorem{lemma}{Lemma}

\DeclareMathOperator{\Hilb}{Hilb}
\DeclareMathOperator{\Sym}{Sym}
\DeclareMathOperator{\mult}{mult}
\DeclareMathOperator{\Supp}{Supp}

\usepackage{fancyhdr}
\fancyhf{}
\fancyhead[LE,RO]{\thepage}
\fancyhead[CE]{\Authors}
\fancyhead[CO]{\Title}

\setlength{\headheight}{3ex}
\pagestyle{fancy}

\author{Martin G. Gulbrandsen \and Andrea T. Ricolfi}
\title{The Euler Characteristic of the Generalized Kummer Scheme}

\makeatletter
\newcommand\Authors{Martin G. Gulbrandsen, \and Andrea T. Ricolfi}
\let\Title\@title
\makeatother

\title{\normalsize{\textbf{THE EULER CHARATERISTIC OF THE GENERALIZED KUMMER SCHEME OF AN ABELIAN THREEFOLD}}}
\author{Martin G. Gulbrandsen \and Andrea T. Ricolfi}
\date{}

\begin{document}
\maketitle

\begin{abstract}
Let $X$ be an Abelian threefold. We prove a formula, conjectured by the first author,
expressing the Euler characteristic of the generalized Kummer schemes $K^nX$ of
$X$ in terms of the number of plane partitions. This computes the
Donaldson-Thomas invariant of the moduli stack $[K^nX/X_n]$.
\end{abstract}

\bigskip
\ssection{Introduction}

Let $n>0$ be an integer. The $n$-th
\emph{generalized Kummer scheme} $K^nX$ of an Abelian variety $X$ is the fibre
over $0_X$ of the composite map 
\[
\Hilb^nX\rightarrow \Sym^nX\rightarrow X,
\]
where the first arrow is the Hilbert-Chow morphism
and the second arrow takes a cycle to the weighted sum of its supporting points.
The purpose of this note is to prove the following formula, which is the three-dimensional
case of a conjecture from~\cite{G2}:

\begin{teo}\label{thm}
Let $X$ be an Abelian threefold. The Euler characteristic of its generalized Kummer Scheme $K^nX$ is
\begin{equation*}
\chi(K^nX)=n^5\sum_{d|n}d^2.
\end{equation*}
\end{teo}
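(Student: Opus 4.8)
The plan is to compute $\chi(K^nX)$ by pushing forward through the Hilbert--Chow morphism and reducing to a weighted sum over partitions of $n$. Let $S^n_0X\subset\Sym^nX$ be the fibre over $0_X$ of the summation map, so that $K^nX$ is the preimage of $S^n_0X$ under $\Hilb^nX\to\Sym^nX$. Stratify $S^n_0X$ by \emph{type}: for a partition $\lambda=(1^{a_1}2^{a_2}\cdots)\vdash n$ let $S^\lambda$ parametrize the cycles with exactly $a_k$ points of multiplicity $k$, and set $m=m(\lambda)=\sum_k a_k$. After the finite étale base change $Z_\lambda\to S^\lambda$ labelling the support points, with
\[
Z_\lambda=\bigl\{(x_1,\dots,x_m)\in X^m \text{ distinct}\ :\ \textstyle\sum_i w_ix_i=0\bigr\},\qquad (w_1,\dots,w_m)=(1^{a_1},2^{a_2},\dots),
\]
and $S^\lambda=Z_\lambda/\prod_k\mathfrak S_{a_k}$ for the free permutation action, the Hilbert--Chow morphism becomes a \emph{trivial} fibration with fibre $\prod_k\bigl(\Hilb^k_0\mathbb{A}^3\bigr)^{a_k}$, where $\Hilb^k_0\mathbb{A}^3$ is the punctual Hilbert scheme; triviality uses that $X$, being Abelian, has trivial tangent bundle. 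Since $\chi$ is a motivic measure (additive over stratifications, multiplicative over Zariski-locally trivial fibrations, and dividing by the degree on free finite quotients), this yields
\[
\chi(K^nX)=\sum_{\lambda\vdash n}\frac{\chi(Z_\lambda)}{\prod_k a_k!}\prod_k\chi\bigl(\Hilb^k_0\mathbb{A}^3\bigr)^{a_k}.
\]

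The heart of the argument is the computation $\chi(Z_\lambda)=(-1)^{m-1}(m-1)!\,n^{6}$, the only place where the Abelian structure is used decisively. The weighted sum $w\colon X^m\to X$, $(x_i)\mapsto\sum_i w_ix_i$, is a surjective homomorphism of Abelian varieties (surjective since $X$ is divisible), and for every set partition $\pi$ of $\{1,\dots,m\}$ its restriction to the partial diagonal $\Delta_\pi=\{x_i=x_j \text{ whenever } i\sim_\pi j\}\cong X^{|\pi|}$ is again a surjective homomorphism of Abelian varieties. Any such homomorphism is a fibre bundle, so its kernel — a finite disjoint union of translates of a sub-Abelian variety — has Euler characteristic $0$ as soon as it is positive-dimensional; it is finite only for the coarsest partition $\hat 1$ (all points equal), where the total weight is $\sum_k k a_k=n$ and the kernel is the full $n$-torsion subgroup $X[n]$, of order $n^{6}$. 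Writing $Z_\lambda$ as $\ker w$ with all diagonals removed and applying inclusion--exclusion over the partition lattice $\Pi_m$ (Möbius inversion), every term vanishes except the one indexed by $\hat 1$, whose coefficient is $\mu_{\Pi_m}(\hat 0,\hat 1)=(-1)^{m-1}(m-1)!$. This gives the claimed value.

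Substituting and factoring out $n^{6}$,
\[
\chi(K^nX)=n^{6}\sum_{\lambda\vdash n}\frac{(-1)^{m(\lambda)-1}(m(\lambda)-1)!}{\prod_k a_k!}\prod_k\chi\bigl(\Hilb^k_0\mathbb{A}^3\bigr)^{a_k},
\]
and one recognizes the sum as the coefficient of $q^n$ in $\log\!\bigl(\sum_{k\ge0}\chi(\Hilb^k_0\mathbb{A}^3)\,q^k\bigr)$. By the classical MacMahon-type formula — for instance via the torus action, for which $\Hilb^k_0\mathbb{A}^3$ has the plane partitions of $k$ as its isolated fixed points — one has $\sum_{k\ge0}\chi(\Hilb^k_0\mathbb{A}^3)\,q^k=M(q)=\prod_{k\ge1}(1-q^k)^{-k}$, the generating function of plane partitions. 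Finally
\[
\log M(q)=-\sum_{k\ge1}k\log(1-q^k)=\sum_{k,j\ge1}\frac{k}{j}\,q^{kj}=\sum_{n\ge1}\Bigl(\tfrac1n\textstyle\sum_{d\mid n}d^2\Bigr)q^n,
\]
so $\chi(K^nX)=n^{6}\cdot\tfrac1n\sum_{d\mid n}d^2=n^{5}\sum_{d\mid n}d^2$.

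I expect the main technical obstacle to be the first step: setting up the Hilbert--Chow stratification and the bundles of punctual Hilbert schemes carefully enough that the Euler characteristic genuinely splits as stated over each stratum. This requires the local model of Hilbert--Chow over a stratum of constant type, the global triviality coming from $TX\cong\mathcal O_X^{\oplus 3}$, and the fact that $\chi$ is a well-behaved motivic measure — a power-structure formalism should handle all of this uniformly. By contrast, the computation of $\chi(Z_\lambda)$ is short once one observes that $\chi$ annihilates every positive-dimensional Abelian variety, so that only the deepest diagonal (the $n$-torsion) survives, and the closing step is a routine manipulation of the plane-partition series.
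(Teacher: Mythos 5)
Your proof is correct, but after the common first step it follows a genuinely different route from the paper. Both arguments begin with the same stratification by partition type and the same identity \eqref{pupupu}, reducing everything to the Euler characteristics of the strata downstairs (your $S^\lambda=Z_\lambda/\prod_k\mathfrak{S}_{a_k}$ is the paper's $V_\alpha$). From there the paper never computes $\chi(V_\alpha)$ in closed form: it sets up an incidence correspondence between $V_\alpha$ and $V_{\hat\alpha^i}$, computes the Euler characteristic of the correspondence in two ways to obtain a recursion for $\chi(V_\alpha)/n^5$ (Lemma \ref{lkl}), and separately proves by a combinatorial induction (Lemma \ref{ljl}) that the solution $c(\alpha)$ of that recursion, paired with $\prod_i P_2(i)^{\alpha_i}$, sums to $\sigma_2(n)$. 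You instead evaluate $\chi(Z_\lambda)=(-1)^{m-1}(m-1)!\,n^6$ directly by M\"obius inversion over the partition lattice, using the fact that $\chi$ annihilates every positive-dimensional Abelian variety so that only the deepest diagonal (the $n$-torsion, of order $n^6$) survives; the remaining partition sum is then literally the coefficient of $q^n$ in $\log M(q)$, and MacMahon's product formula finishes. Your closed form is consistent with the paper's recursion: one checks that $c(\alpha)=(-1)^{m-1}(m-1)!\,n/\prod_k a_k!$ satisfies \eqref{rec}. What each approach buys: yours is shorter, gives an explicit formula for each stratum, and proves the general identity \eqref{eq:chi-series} for an Abelian variety of any dimension $g$ with no extra work (replace $n^6$ by $n^{2g}$ and $M(q)$ by the generating function of $(g-1)$-dimensional partitions) --- this is essentially the method of Debarre for Abelian surfaces and of Shen in general; the paper's incidence correspondence avoids the partition-lattice combinatorics entirely and keeps the geometric input to a single elementary fibration computation at each step of the recursion, which is why it transports verbatim to the general $g$ case in the closing remark. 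Two minor points of care in your write-up: global triviality of the Hilbert--Chow fibration over a stratum (via $TX\cong\mathcal{O}_X^{\oplus 3}$) is more than you need, since constancy of the fibre up to isomorphism already gives multiplicativity of $\chi$; and you should state explicitly that the restriction of the weighted sum to a partial diagonal $\Delta_\pi$ is surjective because all block weights are positive integers, so that its kernel is a finite union of translates of an Abelian subvariety of dimension $3(|\pi|-1)$, whence $\chi=0$ unless $\pi=\hat 1$.
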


Simultaneously with and independent of our work, Shen \cite{shen} has proven the conjecture
in \cite{G2} for $X$ an Abelian variety of arbitrary dimension $g$, stating that
\begin{equation}\label{eq:chi-series}
\sum_{n\geq 0}P_{g-1}(n)q^n=\exp\Bigl(\sum_{n\geq 1}\frac{\chi(K^nX)}{n^{2g}}q^n\Bigr),
\end{equation}
where $P_d(n)$ denotes the number of $d$-dimensional partitions of $n$.
In fact, Shen proves a further generalization of this to the case of a product $X\times Y$,
where one factor $X$ is an Abelian variety, and the other factor $Y$ is an
arbitrary quasi-projective variety.  For $g=3$, the formula in Theorem
\ref{thm} is recovered from \eqref{eq:chi-series} by applying MacMahon's product
formula for plane partitions (cf.~\cite[Cor. 7.20.3]{St}).

One motivation for the computation of $\chi(K^n X)$ is as a test case for
Donaldson--Thomas invariants for Abelian threefolds, as developed in~\cite{G2}.
In particular (see \emph{loc.\ cit.}), the Donaldson-Thomas invariant of the moduli stack $[K^nX/X_n]$ is the rational number
\[
\frac{(-1)^{n+1}}{n^6}\chi(K^nX)=\frac{(-1)^{n+1}}{n}\sum_{d|n}d^2.
\]

The formula \eqref{eq:chi-series} could be motivated by formally expanding Cheah's formula, for the Euler characteristic of Hilbert schemes of points (cf.~\cite{Ch}, and also~\cite{GLM} for a motivic refinement) up to first order in $\chi(X)$, as follows:
\begin{align}
1+\displaystyle\sum_{n\geq 1}&\chi(\Hilb^nX)q^n =
1+\chi(X)\displaystyle\sum_{n\geq 1}\frac{\chi(K^nX)}{n^{2g}}q^n \notag\\
 &\parallel \notag\\
 \exp\Bigl(\chi(X)\log \displaystyle&\sum_{n\geq 0}P_{g-1}(n)q^n\Bigr) =
1+\chi(X)\log \displaystyle\sum_{n\geq 0}P_{g-1}(n)q^n. \notag
\end{align}
The top equality comes from the \'etale cover $X\times K^nX\rightarrow \Hilb^nX$ of degree $n^6$, given
by the translation action of $X$ on the Hilbert scheme. 
The vertical equality is Cheah's formula (cf.~\cite{Ch}, and also~\cite{GLM} for a motivic refinement).
For the bottom equality, we treat $\chi(X)^2$ as zero when expanding $\exp$.

\subsection{Conventions}
We work over $\mathbb C$. The symbol $\chi$ denotes the topological Euler characteristic.
We denote by $\alpha\vdash n$ (one-dimensional) partitions of $n=\sum_i i\alpha_i$, corresponding to classical Young tableaux.
The number of $d$-dimensional partitions of $n$ is denoted $P_d(n)$. A higher dimensional partition can be seen
as a generalized Young tableau, with $(d+1)$-dimensional boxes taking the r{\^o}le of squares. The convention is to set $P_d(0)=1$.

\ssection{Proving the conjecture}
\subsection{Stratification}
The Hilbert scheme of points of any quasi-projective variety $X$ admits a natural stratification by partitions,
\[
\Hilb^nX=\coprod_{\alpha\vdash n}\Hilb^n_\alpha X
\]
where $\Hilb^n_\alpha X$ denotes the (locally closed) locus of subschemes of $X$ having exactly $\alpha_i$
components of length $i$.
Let $X$ be an Abelian variety. Letting $K^n_\alpha X=K^nX\cap \Hilb^n_\alpha X$, we get an induced stratification of the Kummer scheme:
\begin{equation}\label{strat}
K^nX=\coprod_{\alpha\vdash n}K^n_\alpha X.
\end{equation}
For each partition $\alpha\vdash n$, let us define the subscheme
\[
V_\alpha=\set{\xi\in \Sym^n_\alpha X|\Sigma\,\xi=0}\subset \Sym^n_\alpha X
\]
where $\Sigma$ denotes addition of zero cycles under the group law on $X$.
The Hilbert-Chow morphism $\Hilb^nX\rightarrow \Sym^nX$ restricts to morphisms
\[
\pi_\alpha:K^n_\alpha X\rightarrow V_\alpha.
\]
Fixing a point in $V_\alpha$ amounts to fixing the supporting points of the corresponding cycle and their multiplicities. Thus, 
each fibre of $\pi_\alpha$ is isomorphic to a product of punctual Hilbert schemes:
\[
F_\alpha\cong\prod_i\Hilb^i(\mathbb A^3;0)^{\alpha_i}.
\]
Hence, using \eqref{strat}, we find
\begin{equation}\label{pupupu}
\chi(K^n X)=\sum_{\alpha\vdash n}\chi(V_\alpha)\prod_iP_2(i)^{\alpha_i},
\end{equation}
where we have used $P_{d-1}(n)=\chi(\Hilb^n(\mathbb A^d;0))$ (see \cite{ES} for $d=2$ and \cite{Ch}, \cite{GLM} for the general case).

\subsection{Strategy of proof}\label{sec:strategy}

Let $\sigma_2(n) = \sum_{d|n} d^2$ denote the square sum of divisors of an integer $n$.
As is well known \cite{A}, these are related to the number of plane partitions by
\begin{equation}\label{qwer}
nP_2(n) = \sum_{k=1}^n \sigma_2(k)P_2(n-k).
\end{equation}

Let us define, for $\alpha\vdash n$, integers $c(\alpha)\in\mathbb Z$ by the recursion
\begin{equation}\label{rec}
c(\alpha)=
\begin{cases}
 n & \textrm{if}\, \alpha=(n^1),\\
 -\sum_{i, \alpha_i \ne 0} c(\hat\alpha^i) & \textrm{otherwise}.
\end{cases}  
\end{equation}
where, for a partition
$\alpha=(1^{\alpha_1}\cdots i^{\alpha_i}\cdots \ell^{\alpha_\ell}) \vdash n$,
with $\alpha_i \ne 0$, we let
\begin{equation}\label{tyuiop}
\hat\alpha^i=(1^{\alpha_1}\,\cdots\,i^{\alpha_i-1}\,\cdots\,\ell^{\alpha_\ell})\vdash n-i.
\end{equation}

We shall prove Theorem \ref{thm} in two steps, given by the two Lemmas that follow.

\begin{lemma}\label{ljl}
The square sum of divisors $\sigma_2$ can be expressed in terms of the number of
plane partitions $P_2$ as follows:
\begin{equation}\label{formulaqwe}
\displaystyle \sigma_2(n)=\sum_{\alpha\vdash n}c(\alpha)\prod_iP_2(i)^{\alpha_i}.
\end{equation}
\end{lemma}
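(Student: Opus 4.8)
The plan is to recognize both sides as coefficients of a generating function and reduce the identity to the known relation \eqref{qwer}. Define the formal power series $G(q)=\sum_{n\geq 0}P_2(n)q^n$ (MacMahon's series), and $S(q)=\sum_{n\geq 1}\sigma_2(n)q^n$. The key observation is that \eqref{qwer} says precisely $qG'(q)=S(q)\,G(q)$, i.e. $S(q)=q\,(\log G(q))'$. So the right-hand side of \eqref{formulaqwe}, read as a coefficient extraction, should turn out to be $S(q)$ as well, once we understand what series has $\sum_{\alpha\vdash n}c(\alpha)\prod_i P_2(i)^{\alpha_i}$ as its $q^n$-coefficient.

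First I would unwind the recursion \eqref{rec}. The natural guess is that the numbers $c(\alpha)$ are the coefficients appearing when one expands $\log G(q)$—or rather $q(\log G)'$—against the monomials $\prod_i P_2(i)^{\alpha_i}$. Concretely, I expect to prove by induction on $n$ that the polynomial generating identity
\[
\sum_{\alpha\vdash n}c(\alpha)\,x_1^{\alpha_1}x_2^{\alpha_2}\cdots
\]
evaluated at $x_i=P_2(i)$ equals $\sigma_2(n)$, where the $x_i$ are treated as formal variables. The recursion \eqref{rec} with the "removal" operation $\alpha\mapsto\hat\alpha^i$ is exactly the combinatorial shadow of the differential operator $\sum_i i\, x_i\,\partial/\partial x_i$ acting on the "logarithm" side: removing one part of size $i$ from $\alpha$ corresponds to dividing a monomial by one factor $x_i$. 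So I would set up the auxiliary series $F(x_1,x_2,\dots;q)=\sum_{\alpha}c(\alpha)\prod_i x_i^{\alpha_i}\,q^{|\alpha|}$ and show that the base case $\alpha=(n^1)$ contributing $n$, together with the recursion, forces $F = q\,\partial_q\log\bigl(\sum_{k\geq 0}x_k q^k\bigr)$ after the substitution $x_k\mapsto P_2(k)$ (with $x_0=1$). Then specializing and invoking \eqref{qwer} in the form $q\,\partial_q\log G = S$ gives the claim.

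The main obstacle will be the bookkeeping in the inductive step: one must check that the sign $-\sum_{i}c(\hat\alpha^i)$ in \eqref{rec} matches, term by term, the coefficient produced by logarithmic differentiation of a power series of the shape $1 + (\text{higher order})$, where the Möbius-type signs come from expanding $\log(1+u) = u - u^2/2 + \cdots$. An alternative, cleaner route—which I would actually pursue—is to avoid logarithms entirely and instead establish the equivalent multiplicative identity $\sum_{k=1}^n\bigl(\sum_{\alpha\vdash k}c(\alpha)\prod_i P_2(i)^{\alpha_i}\bigr)P_2(n-k) = nP_2(n)$ directly from \eqref{rec} by induction, and then quote \eqref{qwer} to conclude. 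In this formulation the recursion \eqref{rec} telescopes against the convolution with $G(q)$: the $\alpha=(n^1)$ case supplies the $nP_2(0)$ term, and the recursive case, when summed against $P_2(n-k)$, rearranges (by reindexing the removed part) into exactly $qG'=SG$. This reduces everything to a finite identity among the $c(\alpha)$ and makes the proof essentially a one-paragraph induction once \eqref{qwer} is granted.
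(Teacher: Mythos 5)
Your proposal is correct and, in the ``cleaner route'' you say you would actually pursue, coincides with the paper's own proof: both reduce \eqref{formulaqwe} to the convolution identity \eqref{qwer} by expanding the recursion \eqref{rec}, reindexing the resulting double sum via $(k,\beta)=(n-j,\hat\alpha^j)$, and using $\prod_i P_2(i)^{\alpha_i}=P_2(j)\prod_i P_2(i)^{(\hat\alpha^j)_i}$, with an induction on $n$ closing the argument. The generating-function packaging in your first paragraph is just a reformulation of the same computation and is not needed.
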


\begin{lemma}\label{lkl}
The Euler characteristics $\chi(V_\alpha)/n^5$ equal the numbers $c(\alpha)$ defined by recursion \eqref{rec}.
\end{lemma}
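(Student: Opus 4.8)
The plan is to compute $\chi(V_\alpha)$ directly and show the resulting numbers satisfy the same recursion \eqref{rec} that defines $c(\alpha)$, so that $\chi(V_\alpha)/n^5 = c(\alpha)$ follows by induction on $n$. Since $\sigma_2$ appears via \eqref{qwer} and $c(\alpha)$ was manufactured to reproduce $\sigma_2$ through \eqref{formulaqwe}, the heart of the matter is purely the geometry of $V_\alpha$.

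First I would analyze $V_\alpha \subset \Sym^n_\alpha X$. A point of $\Sym^n_\alpha X$ is a zero cycle with, for each $i$, exactly $\alpha_i$ points of multiplicity $i$; such a cycle is the same data as an unordered configuration of $\sum_i \alpha_i$ distinct points of $X$ labelled by their multiplicities, i.e.\ a point of the configuration space $\big(\prod_i X^{(\alpha_i)}\big)^{\circ}$ of the open locus where all points are distinct. The constraint $\Sigma\xi = 0$ cuts this by one equation valued in $X$, namely $\sum_i i\,(\text{sum of the }\alpha_i\text{ points}) = 0$. I would fix a labelling to pass to the étale cover $\big(\prod_i X^{\alpha_i}\big)^{\circ} \to \big(\prod_i X^{(\alpha_i)}\big)^{\circ}$ of degree $\prod_i \alpha_i!$, where the constraint becomes a group homomorphism $\phi_\alpha\colon X^{\sum \alpha_i} \to X$ restricted to the distinct-points locus. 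The key observation is that the Euler characteristic of a fibre of such a surjective homomorphism of Abelian varieties is $\chi(X)^{(\sum\alpha_i) - 1} = 0$ when $\dim X \geq 1$ and $X^{\sum\alpha_i}$ has positive dimension — but this vanishing is killed by the open condition "all points distinct", which is where the interesting combinatorics enter, so I would instead stratify by coincidences among the points and use inclusion–exclusion, or equivalently compute $\chi$ of the compact quotient and then subtract the diagonal strata.

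Concretely: write $m = \sum_i \alpha_i$ for the number of supporting points and let $W_\alpha$ be the analogous locus in $\Sym^n X$ without the distinctness condition (all $m$ labelled points allowed to collide, but collisions are recorded only through the refined partition). The cleanest route is to compute $\chi(V_\alpha)$ by relating it to the pushforward along $\phi_\alpha$ and using additivity of $\chi$ over the stratification of $X^m$ by which coordinates coincide; each stratum is again a product of (smaller) Abelian varieties mapping to $X$ by a homomorphism, and its fibre contributes $0$ to $\chi$ unless the domain is $0$-dimensional. The $0$-dimensional strata are exactly the ones where, after identifying coincident points, the "configuration" degenerates maximally; tracking the multiplicities through these identifications is precisely what builds the partition $\hat\alpha^i$ in \eqref{tyuiop}, and the combinatorial bookkeeping of how many ways an $\alpha$-configuration degenerates to an $\hat\alpha^i$-type configuration produces the sum $-\sum_{i,\alpha_i\neq 0} c(\hat\alpha^i)$ in the recursion \eqref{rec}, up to the $n^5$ normalization coming from $\dim X^m_{\mathrm{constrained}}$ and the degree of the étale cover. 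The base case $\alpha = (n^1)$ is $V_{(n^1)} = \{x \in X : nx = 0\} = X[n]$, which has $\chi(X[n]) = n^{2g} = n^6$, giving $\chi(V_{(n^1)})/n^5 = n$ as required.

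The main obstacle will be making the inclusion–exclusion over diagonals precise enough to see that it reproduces exactly the recursion \eqref{rec}: one must check that the signs, the multinomial coefficients counting degenerations, and the powers of $n$ all conspire correctly, and in particular that no "extra" terms survive beyond those of the form $c(\hat\alpha^i)$. I expect the cleanest formulation avoids raw inclusion–exclusion and instead argues: $\chi(V_\alpha)$ can be computed fibrewise over the "last" diagonal stratum, where the projection $X^m \to X$ remembering the total constrained sum has fibres that are either positive-dimensional Abelian varieties (contributing $\chi = 0$) or the finite group $X[n']$ for appropriate $n'$; summing the finite contributions with the correct combinatorial weights is the computation that must be shown to match \eqref{rec}. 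An alternative, possibly slicker, approach is to verify the identity $\sum_{\alpha\vdash n}\chi(V_\alpha)\prod_i P_2(i)^{\alpha_i} = n^5\sigma_2(n)$ directly by combining \eqref{pupupu} with a prior computation of $\chi(K^nX)$ — but that would be circular here, so I would commit to the fibrewise/stratification computation and accept that verifying its agreement with the recursion is the technical crux.
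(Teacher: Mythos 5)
Your overall strategy is genuinely different from the paper's: the paper never computes $\chi(V_\alpha)$ in closed form, but instead builds, for each fixed $i$ with $\alpha_i\neq 0$, an incidence variety $I\subset X^2\times V_\alpha$ with maps to $V_\alpha$ and to $V_{\hat\alpha^i}$ (subtract a point of multiplicity $i$ and translate back into the Kummer fibre), and computes $\chi(I)$ via both fibrations; comparing the two answers gives a relation between $\chi(V_\alpha)$ and $\chi(V_{\hat\alpha^i})$ which, summed over $i$, is exactly the recursion \eqref{rec}. Your plan --- pass to the ordered \'etale cover of degree $\prod_i\alpha_i!$, view the constraint as a surjective homomorphism $X^m\to X$ with $m=\sum_i\alpha_i$, and exploit that fibres of positive-dimensional abelian group fibrations have vanishing $\chi$ --- is sound in outline and your base case is correct.

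However, there is a genuine gap at the step you yourself flag as the crux, and your description of how it would resolve is wrong. The diagonal strata of $X^m$ are indexed by set partitions of the $m$ support points; identifying coincident points \emph{merges} multiplicities, so each stratum corresponds to a coarsening of $\alpha$ that is still a partition of $n$. No stratum produces the truncated partition $\hat\alpha^i\vdash n-i$ of \eqref{tyuiop}: nothing in a pure coincidence stratification removes $i$ points of mass from the cycle. Moreover the only zero-dimensional stratum is the total diagonal (constraint $nx=0$, contributing $\chi(X_n)=n^6$), so the inclusion--exclusion does not naturally organize itself as $-\sum_i c(\hat\alpha^i)$. What the computation actually yields, via M\"obius inversion on the partition lattice of the $m$-element set, is the closed formula
\begin{equation*}
\chi(V_\alpha)=\frac{(-1)^{m-1}(m-1)!}{\prod_i\alpha_i!}\,n^6,\qquad m=\sum_i\alpha_i,
\end{equation*}
using $\mu(\hat 0,\hat 1)=(-1)^{m-1}(m-1)!$ and the freeness of the $\prod_i S_{\alpha_i}$-action on distinct configurations. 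One must then verify \emph{separately} that these numbers divided by $n^5$ satisfy \eqref{rec}, which is a short but necessary check ($-\sum_i(-1)^{m-2}(m-2)!\,(n-i)\alpha_i/\prod_j\alpha_j!=(-1)^{m-1}(m-1)!\,n/\prod_j\alpha_j!$ since $\sum_i(n-i)\alpha_i=n(m-1)$). So your approach can be completed, and arguably gives more (an explicit formula for $\chi(V_\alpha)$), but as written the decisive combinatorial identification is both absent and misattributed to the diagonal strata.
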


Assuming the two Lemmas, the main theorem follows:

\begin{proof}[Proof of Theorem \ref{thm}]
Equation \eqref{pupupu} gives
\begin{align*}
\frac{\chi(K^nX)}{n^5}&=
\sum_{\alpha\vdash n}\frac{\chi(V_\alpha)}{n^5}\prod_iP_2(i)^{\alpha_i}\\
&=\sum_{\alpha\vdash n}c(\alpha)\prod_iP_2(i)^{\alpha_i}\\
&=\sigma_2(n).
\end{align*}
We have applied Lemma \ref{lkl} in the second equality, and Lemma \ref{ljl} in the last equality.
\end{proof}

\subsection{Proof of Lemma \ref{ljl}: a recursion}

Let us introduce the shorthand
\begin{equation*}
f(\alpha) = \prod_i P_2(i)^{\alpha_i}.
\end{equation*}
Expand the right hand side of \eqref{formulaqwe}, using the definition of $c(\alpha)$:
\begin{equation}\label{eq:doublesum1}
\sum_{\alpha\vdash n} c(\alpha) f(\alpha)
= nP_2(n) -
\sum_{\substack{\alpha\vdash n\\ \alpha \ne (n^1)}}
\sum_{\substack{j\ge 1\\ \alpha_j\ne 0}} c(\hat\alpha^j) f(\hat\alpha^j)
\end{equation}
On the other hand, by induction on $n$, the identity \eqref{qwer} gives
\begin{equation}\label{eq:doublesum2}
\sigma_2(n) = n P_2(n) - \sum_{k=1}^{n-1} \sigma_2(k) P_2(n-k)
= n P_2(n) - \sum_{k=1}^{n-1} \sum_{\beta\vdash k} c(\beta) f(\beta) P_2(n-k).
\end{equation}
The sets over which the double sums in \eqref{eq:doublesum1} and
\eqref{eq:doublesum2} run are clearly identified via $(k, \beta) = (n-j,
\hat\alpha^j)$. Since $f(\alpha) = P_2(j) f(\hat\alpha^j)$, it follows that the
two expressions \eqref{eq:doublesum1} and \eqref{eq:doublesum2} are identical.
Lemma \ref{ljl} is established.

\subsection{Proof of Lemma \ref{lkl}: an incidence correspondence}
In this section we prove Lemma \ref{lkl}.
The technique used is very similar to the one adopted in~\cite{G1}.

Later on, we will need the following:
\begin{rem}\label{opkl}
Let $\alpha=(n^1)$. Then $V_\alpha$ is in bijection with the subgroup $X_n\subset X$ of $n$-torsion points in $X$.
This implies that $\chi(V_\alpha)=\chi(X_n)=n^6$. In other words, $\chi(V_\alpha)/n^5=n=c(\alpha)$.
\end{rem}

Now we fix a partition $\alpha\vdash n$ different from $(n^1)$, and 
an index $i$ such that $\alpha_i\neq 0$. We will compute $\chi(V_\alpha)$ in terms of 
the partition $\hat\alpha^i\vdash n-i$, thanks to an incidence correspondence between the spaces
$V_\alpha\subset \Sym^n_\alpha X$ and $V_{\hat\alpha^i}\subset \Sym^{n-i}_{\hat\alpha^i} X$. 

\medskip
Let us define the subscheme
\[
I=\set{(a,b;\xi)\in X^2\times V_\alpha|\mult_a\xi=i,\,(n-i)b=ia \textrm{ in }X}\subset X^2\times 
V_\alpha.
\]
We use the incidence correspondence 
\[
\begin{tikzcd}
I\arrow{r}{\phi}\arrow[swap]{d}{\psi} & V_\alpha \\
V_{\hat\alpha^i}
\end{tikzcd}
\]
where the map $\phi$ is the one induced by the second projection, and
$\psi$ sends $(a,b;\xi)$ to the cycle $T_b(\xi-ia)$, where $T_b$ is translation by $b\in X$.

\medskip
The strategy is to compute $\chi(I)$ twice: by means of the fibres of $\phi$ and $\psi$ respectively.
This will enable us to compare $\chi(V_\alpha)$ and $\chi(V_{\hat\alpha^i})$.

\medskip

\paragraph{Fibres of $\phi$.}
Let $\xi\in V_\alpha$. This means $\xi\in \Sym^n_\alpha X$ and $\sum \xi=0$ in $X$. We have
\[
\phi^{-1}(\xi)=\set{(a,b)\in X^2|\mult_a\xi=i,\,(n-i)b=ia}\subset X^2.
\]
Let $a_1,\dots,a_{\alpha_i}$ be the $\alpha_i$ points, in the support of $\xi$, having multiplicity $i$ (recall that $i$ is fixed).
Then 
\[
\phi^{-1}(\xi)=\coprod_{1\leq j\leq \alpha_i}H_j,
\]
where $H_j=\set{b\in X|(n-i)b=ia_j}$.
Each $H_j$ is the kernel of the translated isogeny $b\mapsto (n-i)b-ia_j$, which has degree $(n-i)^6$, 
so $\chi(H_j)=(n-i)^6$. This yields $\chi(\phi^{-1}(\xi))=\alpha_i(n-i)^6$. Hence,
\begin{equation}\label{suca11}
\chi(I)=\chi(V_\alpha)\alpha_i(n-i)^6.
\end{equation}

\paragraph{Fibres of $\psi$.}

Let $C\in V_{\hat\alpha^i}$. A point $(a,b;\xi)\in \psi^{-1}(C)$ determines $\xi$ as
\[
\xi=T_b^{-1}(C)+ia,
\]
and the condition $\mult_a\xi=i$ translates into
$\mult_a(T_b^{-1}(C)+ia)=i$, which means $a\notin \Supp(T_b^{-1}(C))$, i.e.~$a+b\notin \Supp(C)$.

\medskip
Let us define the subscheme 
\[
B=\set{(a,b)|(n-i)b=ia}\subset X^2.
\]
Then we note that
\[
\psi^{-1}(C)=\set{(a,b)\in B|a+b\notin \Supp(C)}=B\setminus \coprod_{c\in \Supp(C)}Y_c,
\]
where 
\[
Y_c=\set{(a,b)\in B|a+b=c}\cong \set{b\in X|nb=ic}\cong X_n.
\]

Now, if we map $B\rightarrow X$ through the second projection, we see that the fibres are all isomorphic (to $X_i$, the 
group of $i$-torsion points in $X$). Hence, as $\chi(X)=0$, we find that $\chi(B)=0$. 
Thus, remembering that $\Supp(C)$ consists of $(\sum_i\alpha_i)-1$ distinct points, we find
\[
\chi(\psi^{-1}(C))=-\sum_{c\in \Supp(C)}\chi(Y_c)=-n^6\cdot\Bigl(\sum_i\alpha_i-1\Bigr).
\]
Finally,
\begin{equation}\label{suca22}
\chi(I)=-\chi(V_{\hat\alpha^i})n^6\cdot\Bigl(\sum_i\alpha_i-1\Bigr).
\end{equation}

\medskip
Compare \eqref{suca11} and \eqref{suca22} to get
\[
\chi(V_{\hat\alpha^i})=-\frac{\alpha_i(n-i)^6}{n^6\bigl(\sum_i\alpha_i-1\bigr)}\chi(V_\alpha).
\]

We now conclude by showing that the numbers $\chi(V_\alpha)/n^5$
satisfy the same recursion \eqref{rec} fulfilled by the $c(\alpha)$'s. 
If $\alpha=(n^1)$, we know by Remark $\ref{opkl}$ that 
\begin{equation*}
\frac{1}{n^5}\chi(V_\alpha)=n. 
\end{equation*}
For $\alpha \ne (n^1)$, we can use the above computations to find (the sums run over all
indices $i$ for which $\alpha_i\ne 0$):
\begin{align*}
-\sum_i \frac{1}{(n-i)^5}\chi(V_{\hat\alpha^i})&=\sum_i\frac{1}{(n-i)^5}\frac{\alpha_i(n-i)^6}
{n^6\cdot\bigl(\sum_i\alpha_i-1\bigr)}\chi(V_\alpha)\\
&=\frac{1}{n^5}\frac{\sum_i\alpha_i(n-i)}{n\bigl(\sum_i\alpha_i-1\bigr)}\chi(V_\alpha)\\
&=\frac{1}{n^5}\frac{n\sum_i\alpha_i-\sum_ii\alpha_i}{n\sum_i\alpha_i-n}\chi(V_\alpha)\\
&=\frac{1}{n^5}\chi(V_\alpha).
\end{align*}
Lemma \ref{lkl} is proved. As noted in
Section \ref{sec:strategy}, this completes the proof of Theorem \ref{thm}.

\begin{rem}
For an Abelian variety $X$ of arbitrary dimension $g$, Shen \cite{shen} observes that from
an equality of formal power series in $q$,
\begin{equation*}\label{identtt}
\sum_{n\geq 0}P_{g-1}(n)q^n=\exp\Bigl(\sum_{n\geq 1}s_nq^n \Bigr),
\end{equation*}
defining the sequence $\{s_n\}_{n\ge 1}$,
one obtains by application of the operator $q\frac{\textrm d}{\textrm dq}$
the identity
\begin{equation*}
nP_{g-1}(n) = \sum_{k=1}^n ks_kP_{g-1}(n-k).
\end{equation*}
Starting with this equality, our proofs of Lemmas \ref{ljl}
and \ref{lkl}, with $\chi(V_{\alpha})/n^5$ replaced by $\chi(V_{\alpha})/n^{2g-1}$,
go through without change, and we recover the identity \eqref{eq:chi-series}.
\end{rem}

\end{document}